\documentclass[11pt,dvips,twoside,letterpaper]{article} 
\usepackage{times,fancyhdr}
\usepackage{graphicx}
\usepackage{geometry}
\usepackage{titlesec}

\titlelabel{\thetitle.\quad}                                

\usepackage[labelsep=period]{caption}                       
\captionsetup[table]{font=bf,justification=centering}       

\sloppy

\makeatletter
\setlength\@fptop{0\p@}
\makeatother

\makeatletter 
\def\cleardoublepage{\clearpage\if@twoside \ifodd\c@page\else%
    \hbox{}%
    \thispagestyle{empty}
    \newpage%
    \if@twocolumn\hbox{}\newpage\fi\fi\fi} 
\makeatother

\usepackage{amssymb,amsmath,amsthm,amsfonts,enumerate,verbatim,color}
\usepackage{appendix}

\newtheorem{theorem}{Theorem}
\newtheorem*{theorem4*}{Theorem 4 (restated)}
\newtheorem*{theorem1*}{Theorem 1 (restated)}

\newcommand{\defeq}{:=}
\newcommand{\two}{2}
\newcommand{\graph}{G}
\newcommand{\legs}{e}
\newcommand{\map}{\varphi}
\newcommand{\CC}{\mathcal{C}}
\newcommand{\card}{\mbox{card}}

\newcommand{\lp}{k}
\newcommand{\vertex}{n}
\newcommand{\edge}{m}
\newcommand{\iso}{f}

\def\b{\bigg}

\def\({\b(}
\def\[{\b[}
\def\){\b)}
\def\]{\b]}

\makeatletter
\setlength\@fptop{0\p@}
\makeatother

\raggedbottom 			

\sloppy

\setlength{\topmargin}{-0.35in}
\setlength{\textheight}{8.5in}   
\setlength{\textwidth}{5.5in}    
\setlength{\oddsidemargin}{0.5in}
\setlength{\evensidemargin}{0.5in}
\setlength{\headheight}{26pt}
\setlength{\headsep}{8pt} 

\begin{document}
\title{\bfseries\scshape{Several Graph Sequences as Solutions\\ of a Double 
Recurrence\thanks{The second author performed this work during a postdoctorate 
at the University of Lisbon and later at Western Illinois University. 
The third author performed this work within the activities
of the Centro de Estruturas Lineares e Combinat\'orias (University of Lisbon,
Portugal) and was supported by the fellowship SFRH/BPD/48223/2008
provided
by the Portuguese Foundation for Science and Technology (FCT).}}}
\author{\bfseries\itshape Christian Brouder$^{1,}$\thanks{E-mail address: 
Christian.Brouder@impmc.upmc.fr}~,  William J. Keith$^{2,}$\thanks{E-mail 
address: wjkeith@mtu.edu} ~and \^Angela Mestre$^{3,}$\thanks{E-mail address: 
amestre@fc.ul.pt}\\
$^{1}$Institut de Min\'{e}ralogie, de Physique des Mat\'{e}riaux
 et de Cosmochimie,\\ 
Universit\'{e} Pierre et Marie Curie-Paris 6, 4 place Jussieu,
75005 Paris, France\\
$^{2}$Michigan Technological University, US\\
$^{3}$Center
for Functional Analysis, Linear Structures and Applications (CEAFEL)\\
Group for Linear, Algebraic and Combinatorial Structures\\
Departamento de Matem\'atica\\
Faculdade de Ci\^encias da
Universidade de Lisboa,
Portugal}
\date{Received: January 13, 2014; Accepted: October 27, 2014}
\maketitle
\thispagestyle{empty}
\setcounter{page}{37}
\thispagestyle{fancy}
\fancyhead{}
\fancyhead[L]{Journal of Combinatorics and Number Theory  \\ 
Volume 6, Number 2, pp. {\thepage--\pageref{lastpage-01}}} 
\fancyhead[R]{ISSN 1942-5600  \\
\copyright{} 2015 Nova Science Publishers, Inc.}
\fancyfoot{}
\renewcommand{\headrulewidth}{0pt}

\begin{abstract} We describe the combinatorics that arise in summing a double 
recursion formula for the enumeration of connected Feynman 
graphs in quantum field theory.  In one index 
the problem is more tractable and 
yields concise formulas which are combinatorially interesting on their own.  In 
the other index, one of these sums is Sloane's sequence {A001865}.
\end{abstract}

\noindent \textbf{2010 MSC:} Primary 11Y55, secondary 05C25, 05C30 

\noindent \textbf{Keywords:} Abel's theorem, Dziobek's recurrence, multigraphs


\section{Introduction}\label{sec:intro}
\setcounter{equation}{0} \setcounter{Thm}{0} \setcounter{Lem}{0}
\setcounter{Cor}{0}\setcounter{Conj}{0}

The present paper was originally motivated by the desire to produce a closed 
form for a recursion for enumerating connected Feynman graphs in quantum 
field theory. These are combinatorial species whose properties often resemble 
those of multigraphs with multiple edges and loops allowed. 
The attempt to reduce the recurrence to 
closed 
formulae turns out to involve interesting combinatorics with connections to 
several standard graph sequences, the description of which is the purpose of this paper.

Our formula is related to the formula of Proposition 15 of \cite{MeOe:loop}. 
The latter generates tensors representing connected Feynman graphs parametrized 
by their vertex and cyclomatic numbers. 
Such formula induces a recurrence
for the 
sum of the inverses of the orders of the groups of automorphisms of all the 
pairwise non-isomorphic connected Feynman graphs on $n$ vertices and 
cyclomatic 
number $k$, denoted by $I(n,k)$. For all $n\geq1$ and $k\geq0$ this reads as 
follows:

$$I(n,k) = \frac{1}{2(n+k-1)} \times \\
\left(n^2I(n,k-1)+\sum_{i=1}^{n-1}\sum_{j=0}^k 
i(n-i)I(i,j)I(n-i,k-j)\right)\,.$$

Here we solve the recurrence above for any constant $n$ or for small $k$.

In constant $n$, the answer is more complete.  The usual standard in 
combinatorics for a well-understood sequence is to produce a concise generating 
function.  We find several forms of the generating function $$R_n(x) = 
\sum_{k=0}^{\infty} I(n,k) x^k,$$ \noindent including the two-variable 
generating function

\pagestyle{fancy}
\fancyhead{}
\fancyhead[EC]{Christian Brouder, William J. Keith and \^Angela Mestre}
\fancyhead[EL,OR]{\thepage}
\fancyhead[OC]{Several Graph Sequences as Solutions of a Double Recurrence}
\fancyfoot{}
\renewcommand\headrulewidth{0.5pt} 

\begin{theorem}\label{RnFunction} $\sum_{n=1}^{\infty} R_n(x) (sx)^n = x \log 
\left( \sum_{n=0}^{\infty} \frac{s^n}{n!} e^{xn^2/2} \right).$\end{theorem}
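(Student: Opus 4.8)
The plan is to pass from the unlabeled, automorphism-weighted sums $I(n,k)$ to labeled multigraphs, where the two-variable generating function factorizes as an exponential. First I would rewrite $R_n(x)$ over isomorphism classes: since a connected multigraph $G$ on $n$ vertices with $e(G)$ edges has cyclomatic number $k(G)=e(G)-n+1$,
\[
R_n(x) = \sum_{G} \frac{x^{e(G)-n+1}}{|\mathrm{Aut}(G)|}, \qquad R_n(x)(sx)^n = s^n x \sum_{G} \frac{x^{e(G)}}{|\mathrm{Aut}(G)|},
\]
the sums running over connected $G$ on $n$ vertices. Setting $C(s,x) := \sum_{n\geq 1} s^n \sum_{G} x^{e(G)}/|\mathrm{Aut}(G)|$, the identity to prove reduces to $\sum_{n\geq 1} R_n(x)(sx)^n = x\,C(s,x)$, so it suffices to show $C(s,x) = \log\bigl(\sum_{n\geq 0} \frac{s^n}{n!} e^{xn^2/2}\bigr)$.

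The central step is to identify $A(s,x) := \sum_{n\geq 0} \frac{s^n}{n!} e^{xn^2/2}$ as the generating function of all (not necessarily connected) multigraphs weighted by $1/|\mathrm{Aut}|$. For this I would fix $n$ labeled vertices and record a multigraph by its edge multiplicities $r_{ij}$ between distinct vertices and $s_i$ loops at each vertex. Summing the weight $x^{(\text{edges})}$ divided by the internal edge symmetry — $r_{ij}!$ for parallel edges and $2^{s_i}s_i!$ for loops, matching the conventions fixed in the introduction — gives
\[
\prod_{i<j}\Bigl(\sum_{r\geq 0}\frac{x^r}{r!}\Bigr)\prod_{i}\Bigl(\sum_{s\geq 0}\frac{x^s}{2^s s!}\Bigr) = \bigl(e^{x}\bigr)^{\binom{n}{2}}\bigl(e^{x/2}\bigr)^{n} = e^{x n^2/2},
\]
using $\binom{n}{2}+n/2 = n^2/2$. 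The point of splitting $n^2/2$ into the inter-vertex part $\binom{n}{2}$ and the loop part $n/2$ is that the half-weight on loops is exactly what the loop-flip symmetry demands.

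Next I would convert this labeled count into the unlabeled, automorphism-weighted sum. For an isomorphism class $G$ the automorphism group sits in a short exact sequence with kernel the edge-symmetry group $\mathrm{Aut}_{\mathrm{e}}(G)$, of order $\prod r_{ij}!\prod 2^{s_i}s_i!$, and quotient the group $\mathrm{Aut}_{\mathrm{v}}(G)\leq S_n$ of vertex permutations extending to automorphisms, so that $|\mathrm{Aut}(G)| = |\mathrm{Aut}_{\mathrm{e}}(G)|\,|\mathrm{Aut}_{\mathrm{v}}(G)|$. The class $G$ contributes $n!/|\mathrm{Aut}_{\mathrm{v}}(G)|$ labelings, each carrying the reduced weight $x^{e(G)}/|\mathrm{Aut}_{\mathrm{e}}(G)|$; dividing the labeled total by $n!$ collapses to $x^{e(G)}/|\mathrm{Aut}(G)|$. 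Hence $\frac{1}{n!}e^{xn^2/2} = \sum_{G}x^{e(G)}/|\mathrm{Aut}(G)|$ over all multigraphs on $n$ vertices, i.e.\ $A(s,x)$ is the full generating function.

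Finally I would invoke the exponential formula: since any multigraph is an unordered disjoint union of its connected components and $|\mathrm{Aut}(\bigsqcup_i G_i^{m_i})| = \prod_i |\mathrm{Aut}(G_i)|^{m_i} m_i!$, the $1/|\mathrm{Aut}|$-weighting yields $A(s,x) = \exp C(s,x)$, whence $C(s,x)=\log A(s,x)$ and the theorem follows. The main obstacle — and the step most in need of care — is the automorphism bookkeeping in the third paragraph: one must verify that the kernel and quotient orders multiply correctly and that the loop and multi-edge conventions of the introduction are \emph{precisely} the edge-symmetry orders, so that the labeled weight $e^{xn^2/2}$ genuinely descends to the $1/|\mathrm{Aut}|$ weighting. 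The exponential-formula step is then routine once the weighting is pinned down.
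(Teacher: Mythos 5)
Your proof is correct, but it takes a genuinely different route from the paper. The paper works entirely from the recursion (\ref{eq:Izero}): it converts it into a differential recursive equation for $R_n$, substitutes $R_n(x)=P_n(e^x)e^{nx/2}/(n!\,x^{n-1})$ to obtain polynomials $P_n$, forms the exponential generating function $f(y,z)=\sum P_n(z)y^n/n!$, linearizes the resulting PDE by $f=\log g$, and solves $2zg_z=y^2g_{yy}$ with $g(y,1)=e^y$ to get $g=\sum y^nz^{n(n-1)/2}/n!$. You instead bypass the recursion entirely and argue combinatorially from the definition of $I(n,k)$ as $\sum_G 1/|\mathrm{Aut}(G)|$: the labeled edge-multiplicity count gives $e^{xn^2/2}$ per $n$ vertices, the orbit--stabilizer bookkeeping with $|\mathrm{Aut}(G)|=|\mathrm{Aut}_{\mathrm{v}}(G)|\prod r_{ij}!\prod 2^{s_i}s_i!$ descends this to the $1/|\mathrm{Aut}|$ weighting (and your factorization is consistent with the two conventions, $2^ss!$ and $2r!$, fixed in the introduction), and the exponential formula extracts the connected part as a logarithm. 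What your approach buys is a transparent conceptual explanation of the answer --- in particular of the exponent $n^2/2=\binom{n}{2}+n/2$ as edge slots plus half-weighted loop slots --- and it is the exact multigraph analogue of the identity $P_n(z)=C_n(z-1)$ that the paper only observes a posteriori via Gessel--Wang. What the paper's approach buys is that it proceeds from the recursion itself, which is the object actually supplied by the Hopf-algebraic construction; your argument instead leans on the combinatorial description of $I(n,k)$ being equivalent to the recursion, which the paper asserts via Proposition 15 of Mestre--Oeckl but does not reprove. As long as that equivalence is taken as given (as the paper does), your proof is complete; the automorphism bookkeeping you flag as the delicate step is indeed the only place where an error could hide, and it checks out.
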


Define $J(n,k)\defeq 2^k (n+k-1)!  I(n,k)$ to normalize the sequences $I(n,k)$. 
 We find that the sequences of $J$ in constant $n$ possess concise rational 
generating functions whose coefficients are simple formulas.  With the last sum 
running over all compositions of $n$, i.e. sequences of positive integers that 
sum to $n$, the general form is:

\begin{theorem}$$\sum_{k \geq 0} J(n,k) t^k = -\frac{1}{(2t)^{n-1}} 
\sum_{m=1}^n \frac{(-1)^m}{m}  \sum_{(n_1,\dots,n_m)\vdash n} 
\frac{1}{n_1!\dots 
n_m!} \frac{1}{1-(n_1^2+\dots+n_m^2)t}.$$
\end{theorem}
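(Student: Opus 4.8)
The plan is to derive an explicit closed form for $R_n(x)$ from Theorem \ref{RnFunction}, and then to pass from $R_n$ to $\sum_{k\geq 0} J(n,k)t^k$ by a Gamma-integral (Laplace-type) transform that absorbs the factor $2^k(n+k-1)!$ defining $J$. First I would invert the two-variable identity. Writing $F(s,x) = \sum_{n\geq 0}\frac{s^n}{n!}e^{xn^2/2}$, Theorem \ref{RnFunction} reads $\sum_{n\geq 1} R_n(x)(sx)^n = x\log F(s,x)$, so that $R_n(x) = x^{-(n-1)}[s^n]\log F(s,x)$. Since $F = 1 + \sum_{n\geq 1}a_n s^n$ with $a_n = e^{xn^2/2}/n!$, expanding $\log(1+w) = \sum_{m\geq 1}\frac{(-1)^{m-1}}{m}w^m$ and reading off the coefficient of $s^n$ (which selects ordered compositions $n_1+\cdots+n_m = n$ with $n_i\geq 1$) yields
$$R_n(x) = \frac{1}{x^{n-1}}\sum_{m=1}^n \frac{(-1)^{m-1}}{m}\sum_{(n_1,\dots,n_m)\vdash n}\frac{1}{n_1!\cdots n_m!}\,e^{x(n_1^2+\cdots+n_m^2)/2}.$$

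Next I would use $(n+k-1)! = \int_0^\infty u^{n+k-1}e^{-u}\,du$ to write
$$\sum_{k\geq 0}J(n,k)t^k = \sum_{k\geq 0}2^k(n+k-1)!\,I(n,k)\,t^k = \int_0^\infty u^{n-1}e^{-u}R_n(2tu)\,du,$$
having interchanged sum and integral and recognized $\sum_k I(n,k)(2tu)^k = R_n(2tu)$. Substituting the closed form above with $x = 2tu$, the crucial cancellation occurs: the prefactor $(2tu)^{-(n-1)}$ cancels the $u^{n-1}$ in the integrand, leaving
$$\frac{1}{(2t)^{n-1}}\sum_{m=1}^n\frac{(-1)^{m-1}}{m}\sum_{(n_1,\dots,n_m)\vdash n}\frac{1}{n_1!\cdots n_m!}\int_0^\infty e^{-u\,(1-t(n_1^2+\cdots+n_m^2))}\,du.$$
Each inner integral evaluates to $(1-t(n_1^2+\cdots+n_m^2))^{-1}$, and rewriting $(-1)^{m-1} = -(-1)^m$ produces exactly the claimed expression.

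The individual steps are routine once the closed form for $R_n$ is in hand; the point requiring care is the legitimacy of the manipulations. Since the final statement is an identity of formal power series in $t$, the cleanest rigorous route avoids analytic convergence altogether: expand $(1-\sigma t)^{-1} = \sum_{j\geq 0}(\sigma t)^j$ with $\sigma = n_1^2+\cdots+n_m^2$ on the right-hand side, extract the coefficient of $t^k$ (matching $j = n+k-1$ against the $(2t)^{-(n-1)}$ shift), and check that it agrees with $2^k(n+k-1)!\,[x^k]R_n(x)$ computed directly from the closed form. This reduces the whole theorem to the elementary identity $2^k(n+k-1)!\cdot\frac{\sigma^{n+k-1}}{2^{n+k-1}(n+k-1)!} = \frac{\sigma^{n+k-1}}{2^{n-1}}$, so the Gamma integral is best presented as a mnemonic for this coefficient bookkeeping rather than as an analytic claim. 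The main obstacle is thus not any deep computation but organizing the two coefficient extractions (in $s$ for the logarithm, then in $x$ or $u$ for the transform) so that the $(n+k-1)!$ and $2^k$ factors visibly conspire to collapse the exponential series into the stated geometric terms $1/(1-(n_1^2+\cdots+n_m^2)t)$.
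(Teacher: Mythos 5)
Your proposal is correct and follows essentially the same route as the paper: both expand the logarithm from Theorem \ref{RnFunction} over compositions of $n$ and use the Gamma integral $\int_0^\infty u^{n+k-1}e^{-u}\,du=(n+k-1)!$ to turn the exponentials $e^{x(n_1^2+\cdots+n_m^2)/2}$ into the geometric factors $1/(1-(n_1^2+\cdots+n_m^2)t)$ while absorbing the normalization $2^k(n+k-1)!$. The only cosmetic difference is that you extract the coefficient of $s^n$ before integrating, whereas the paper integrates the two-variable function $h(2xt,s/2)$ first and reads off $Z_n(t)$ at the end; your closing remark recasting the integral as formal coefficient bookkeeping is a sound (and slightly more careful) justification of the same computation.
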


There are $2^{n-1}$ compositions of $n$, but the number of distinct sums of 
squares of parts among these compositions is much smaller, meaning that the 
number of factors in each generating function is much smaller than might have 
been expected from the above theorem.  Hence a more efficient method of 
producing a generating function and explicit formula for a given $n$ might be 
desired.  There is a fairly simple routine for calculating explicit formulas 
and generating functions, which we give in the last section. Values for $n=1$ 
through 4 are given in the table in the theorem below:

\begin{theorem}For $k \geq 0$, the formulas and generating functions for 
$J(n,k)$, $1 \leq n \leq 4$, are as given in the following table.

{\footnotesize \begin{center}\begin{tabular}{|c|c|c|}
\hline $J(n,k)$ & Formula & Generating function $\sum_{k=0}^{\infty} J(n,k) 
x^k$\\
\hline $J(1,k)$ & $1$ &  $\frac{1}{1-x}$ \\
\hline $J(2,k)$ & $4^k - 2^{k-1}$ & $\frac{1/2}{(1-2x)(1-4x)}$ \\
\hline $J(3,k)$ & $\left( \frac{-25}{8} \right) 5^k + \left( \frac{3}{4} 
\right) 3^k + \left( \frac{27}{8} \right) 9^k$ & $\frac{1}{(1-3x)(1-5x)(1-9x)}$ 
\\
\hline $J(4,k)$ & $\left( \frac{64}{3} \right) 16^k + \left( \frac{27}{2} 
\right) 6^k - \left( \frac{250}{12} \right) 10^k - (2) 4^k - (8) 8^k$ & 
$\frac{4-34x}{(1-4x)(1-6x)(1-8x)(1-10x)(1-16x)}$ \\
\hline 
\end{tabular}\end{center}}
\end{theorem}
\noindent Note that $J(3,k)$ is the sequence A017897, while $J(4,k)/2$ 
is sequence A221959 in \cite{OEIS}.

For $k=0$ or $k=1$ the calculations are more difficult.  We have

\begin{theorem}For all $n \geq 1$, $I(n,0) = \frac{n^{n-2}}{n!}$ and $I(n,1) = 
\frac{1}{2} \sum_{\mu=1}^n \frac{n^{n-\mu-1}}{(n-\mu)!}$.\end{theorem}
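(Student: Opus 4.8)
The plan is to reduce both identities to labeled enumeration via the orbit--stabilizer correspondence, and then to evaluate the resulting labeled counts using Cayley's formula and Lagrange inversion. The principle I would invoke throughout is that if $S_n$ acts on the set of vertex-labeled structures of a given type by permuting labels, then the orbits are exactly the isomorphism classes, the stabilizer of a labeled structure is its automorphism group, and hence $\sum_{[G]} 1/|\mathrm{Aut}(G)| = \frac{1}{n!}\,\#\{\text{labeled structures}\}$, the sum ranging over isomorphism classes; this converts each $I(n,k)$ into a labeled count divided by $n!$. For $I(n,0)$ the connected multigraphs with cyclomatic number $0$ are precisely the trees (a loop or a repeated edge would already force $k\ge 1$), so there are no edge symmetries to account for and the labeled count is simply the number of labeled trees on $n$ vertices. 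By Cayley's formula this is $n^{n-2}$, and the displayed principle gives $I(n,0)=n^{n-2}/n!$ at once.

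For $I(n,1)$ I would first describe the structure: a connected multigraph with cyclomatic number $1$ has a unique cycle, and deleting the cycle edges leaves a rooted forest whose roots are the cycle vertices. The ``cycle'' is a loop when its length is $\mu=1$, a pair of parallel edges when $\mu=2$, and an ordinary cycle when $\mu\ge 3$. Writing $T(x)=\sum_{m\ge1} m^{m-1}x^m/m!$ for the exponential generating function of rooted labeled trees, which satisfies $T=x\,e^{T}$, I would assemble the generating function as a cyclic arrangement of $\mu$ rooted trees. An undirected $\mu$-cycle carries a dihedral symmetry of order $2\mu$, contributing $T^{\mu}/(2\mu)$; the crucial point is that the automorphism conventions of the introduction make the degenerate cases fit the same formula, since a single loop has automorphism order $2=2\cdot1$ and a double edge has automorphism order $4=2\cdot 2$, matching $2\mu$ at $\mu=1,2$. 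Summing over all $\mu\ge1$ then gives the closed form
\[
U(x)\;=\;\sum_{\mu\ge 1}\frac{T(x)^{\mu}}{2\mu}\;=\;-\tfrac12\log\bigl(1-T(x)\bigr)
\]
for the generating function whose coefficient of $x^n$ is $I(n,1)$.

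It then remains to extract coefficients. By Lagrange inversion applied to $T=x\,e^{T}$,
\[
[x^n]\,T(x)^{\mu}=\frac{\mu}{n}\,[t^{\,n-\mu}]\,e^{nt}=\frac{\mu\,n^{\,n-\mu-1}}{(n-\mu)!},
\]
which vanishes for $\mu>n$, so that
\[
I(n,1)=[x^n]\,U(x)=\frac12\sum_{\mu\ge1}\frac{1}{\mu}\,[x^n]\,T^{\mu}
=\frac12\sum_{\mu=1}^{n}\frac{n^{\,n-\mu-1}}{(n-\mu)!},
\]
as claimed.

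The step I expect to require the most care is the symmetry bookkeeping in the definition of $U(x)$: one must check that summing $1/|\mathrm{Aut}|$ over isomorphism classes of unicyclic multigraphs really does produce the uniform factor $1/(2\mu)$, reconciling the dihedral symmetry of the cycle skeleton with the loop-flip and parallel-edge symmetries prescribed in the introduction, and handling the usual exponential-generating-function identification of the $2!$ orderings in the $\mu=2$ case. The cleanest way to make this rigorous is to verify the $\mu=1$ and $\mu=2$ contributions directly against the stated automorphism orders $2^{s}s!$ and $2\,r!$, and to treat $\mu\ge3$ via the labeled count $\binom{n}{\mu}\frac{(\mu-1)!}{2}\,\mu\,n^{\,n-\mu-1}=\frac{n!\,n^{\,n-\mu-1}}{2\,(n-\mu)!}$ of cycles with attached rooted forests (using that the number of rooted forests on $[n]$ with $\mu$ prescribed roots is $\mu\,n^{\,n-\mu-1}$), after which the three cases combine into the single generating function above. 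One could alternatively prove the formula by induction on $n$ using the recurrence for $I(n,k)$ induced by the alternative formula of \cite{MeOe:loop}, but the generating-function route seems more transparent and self-contained.
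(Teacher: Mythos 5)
Your proof is correct, but it takes a genuinely different route from the paper. The paper works entirely inside the recursion inherited from Proposition 15 of \cite{MeOe:loop}: for $k=0$ it normalizes to $T(n)=nJ(n,0)$ and recognizes Dziobek's recurrence for Cayley's formula, and for $k=1$ it proceeds by induction on $n$, collapsing the convolution sum with the Abel-type identity $n(n+j)^{n-1}=\sum_{i=1}^n\binom{n}{i}i^{i-1}(n+j-i)^{n-i}$. You instead prove both statements directly from the combinatorial definition of $I(n,k)$, never touching the recursion: orbit--stabilizer to convert automorphism-weighted sums into labeled counts, Cayley's formula for $k=0$, and for $k=1$ the cycle-plus-rooted-forest decomposition with the generating function $-\tfrac12\log(1-T(x))$ and Lagrange inversion. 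Your route buys a structural explanation the paper's proof does not give --- the summation index $\mu$ is literally the cycle length, and the degenerate loop and double-edge cases slot into the dihedral weight $1/(2\mu)$ precisely because of the automorphism conventions $2^ss!$ and $2r!$ stated in the introduction (I checked your $\mu=1,2$ and $\mu\ge3$ counts and they are right, including $\mu=n$). What the paper's route buys is a verification that the \emph{recursion} produces these values, which matters for internal consistency since Theorems 2--4 are all derived from that recursion; your argument leaves that link to the cited Proposition 15. The one step you correctly flag as delicate --- that $\sum 1/|\mathrm{Aut}|$ over isomorphism classes of unicyclic multigraphs equals the rigidified labeled count divided by $n!\cdot 2\mu$ even where the dihedral action is not free --- does need the case-by-case verification you sketch in your last paragraph, but that verification goes through, so there is no gap.
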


As expected, for $k=0$ the recurrence is solved by the sum of the inverses of 
the orders of the groups of automorphisms of all the trees (up to isomorphism) 
on $n$ vertices. This clearly equals $n^{n-2}/n!$ for an isomorphism of trees 
on $n$ vertices is defined by the action of the symmetric group on $n$ symbols, 
while the number of such trees is given by Cayley's formula $n^{n-2}$ 
\cite{Polya}. For $k=1$ if we multiply the $n$th term of $I(n,1)$ by $2n!$ we 
obtain the $n$th term of Sloane's sequence  A001865 in \cite{OEIS}. 
This is thus proven to be a recurrence sequence. Note that Flajolet, Knuth, and Pittel  
showed that Sloane's sequence is related to the enumeration of connected 
multigraphs with only one cycle \cite{FKB}.

This paper is organized as follows. In 
Section \ref{sec:enumerator} we prove Theorem 1 by solving the differential 
recursive equation. We also give a general expression for $J(n,k)$. In Section 
\ref{sec:hands} we consider $R_n$ from another angle, and re-prove some of the 
expansions with a more hands-on approach that displays different underlying 
combinatorics.  In Section \ref{sec:sloane} we prove Theorem 4.  When $k=0$, we 
use Dziobek's recurrence \cite{dziobek} for Cayley's formula  to show that 
$J(n,0)=n^{n-3}$. When $k=1$, we use an identity related to Abel's theorem to 
prove that  $J(n,1)= n!\sum_{\mu=1}^{n} n^{n-\mu-1}/(n-\mu)!$, which is 
sequence A001865 in \cite{OEIS}.  Finally, in the appendix for 
completeness, we give the mathematical definitions of Feynman graphs and their 
automorphisms in the context of this paper.

\section{Formulas in Constant $n$}\label{sec:enumerator}

We use the formula of Proposition 15 of \cite{MeOe:loop} to give a recursive 
definition for the numbers  $I(n,k)$.  
With 
boundary condition 
$$I(n,k)=0 \, \text{ for } \, k<0 \, \text{ and/or } \, n<1 \, \text{, and } 
I(1,0)=1$$

\noindent then for all $n\geq1$ and $k\geq0$ we have
\begin{multline}\label{eq:Izero}
I(n,k) = \frac{1}{2(n+k-1)} \times \\
\left(n^2I(n,k-1)+\sum_{i=1}^{n-1}\sum_{j=0}^k i(n-i)I(i,j)I(n-i,k-j)\right)\,.
\end{multline}

For convenience of 
calculation, we define 
\begin{equation}\label{eq:J}
J(n,k) \defeq 2^k (n+k-1)! I(n,k)\,.
\end{equation}

\noindent Then, for all $n\geq1$ and $k\geq0$  the recursion for $J(n,k)$ is
\begin{multline}\label{eq:Jzero}
J(n,k) = n^2 J(n,k-1) + \\ \frac{1}{2} \sum_{i=1}^{n-1} \sum_{j=0}^k 
\binom{n+k-2}{i+j-1} i(n-i)J(i,j)J(n-i,k-j)\,.
\end{multline}

The same boundary conditions hold.

\subsection{Generating Functions}

Define the generating function for the sequence of $I(n,k)$ with $n$ constant:
\begin{eqnarray}
R_n(x) &=& \sum_{k=0}^\infty I(n,k)x^k. 
\label{Rn}
\end{eqnarray}
In particular,
\begin{eqnarray}
R_1(x)  = \sum_{k=0}^\infty I(1,k)x^k 
  = \sum_{k=0}^\infty \frac{J(1,k)}{2^k k!} x^k = e^{x/2},
\end{eqnarray}
\noindent because $J(1,k)=1$ for all $k$.

We now wish to prove 

\begin{theorem1*} $$\sum_{n=1}^{\infty} R_n(x) (sx)^n = x \log \left( 
\sum_{n=0}^{\infty} \frac{s^n}{n!} e^{xn^2/2} \right).$$
\end{theorem1*}

\begin{proof} If we multiply each $I(n,k)$ by $x^k$ and sum, the recursive 
equation (\ref{eq:Izero}) for $I(n,k)$ becomes a differential recursive 
equation for $R_n$:
\begin{eqnarray*}
2(n-1) R_n(x) + 2x R'_n(x) &=&
  n^2 x R_n(x) + \sum_{i=1}^{n-1} i(n-i) R_i(x) R_{n-i}(x), 
\end{eqnarray*}
\noindent with boundary conditions $R_n(0)=I(n,0)=n^{n-2}/n!$, $R_n(x) = 0$ for 
$n<1$.

To simplify, apply the change of variable 
\begin{eqnarray*}
R_n(x) &=& P_n(e^x) \frac{e^{nx/2}}{n! x^{n-1}}.
\label{Pn}
\end{eqnarray*}

The inverse relation
\begin{eqnarray*}
P_n(z) &=& n! (\log z)^{n-1} z^{-n/2} R_n(\log z)
\end{eqnarray*}
gives the boundary condition $P_n(1)=\delta_{n,1}$.

The differential recursive equation satisfied by $P_n(z)$ is
\begin{eqnarray*}
2 z P'_n(z) &=& n(n-1) P_n(z) + \sum_{i=1}^{n-1} i(n-i)\binom{n}{i}
 P_i(z) P_{n-i}(z).
\end{eqnarray*}

Observe that $P_1(z) = 1$, and if $P_i(z)$ is polynomial for all $1 \leq i < 
n$, then $P_n(z)$ satisfying this recurrence is polynomial as well.  The first 
polynomials are
\begin{eqnarray*}
P_1(z) &=& 1,\\
P_2(z) &=& -1+z,\\
P_3(z) &=& 2 - 3 z + z^3,\\
P_4(z) &=& -6 + 12 z - 3 z^2 - 4 z^3 + z^6,\\
P_5(z) &=& 24 - 60 z + 30 z^2 + 20 z^3 - 10 z^4 - 5 z^6 + z^{10},\\
P_6(z) &=& -120 + 360 z - 270 z^2 - 90 z^3 + 120 z^4 + 20 z^6 - 15 z^7 
  - 6 z^{10} + z^{15},\\
P_7(z) &=& 720 - 2520 z + 2520 z^2 + 210 z^3 - 1260 z^4 + 210 z^5 -
70 z^6 + 210 z^7 - 35 z^9 \\ && + 42 z^{10} - 21 z^{11} - 7 z^{15} + z^{21}.
\end{eqnarray*}
The coefficients of the polynomials $P_n(z)$ form the sequence A221960 in 
\cite{OEIS}.
The equation for the generating function
\begin{eqnarray*}
f(y,z) &=& \sum_{n=1}^\infty P_n(z) \frac{y^n}{n!} 
\end{eqnarray*}

\noindent is
\begin{eqnarray*}
2z \frac{\partial f}{\partial z} &=& y^2 
  \frac{\partial^2 f}{\partial y^2}
  + \Big(y\frac{\partial f}{\partial y}\Big)^2,
\end{eqnarray*}

\noindent with the boundary condition
\begin{eqnarray*}
f(y,1) &=& \sum_{n=1}^\infty P_n(1) \frac{y^n}{n!} =y.
\end{eqnarray*}

The non-linear term is eliminated by the change of variable $f=\log g$. The 
equation for $g$ is therefore linear:
\begin{eqnarray*}
2z \frac{\partial g}{\partial z} &=& y^2 
  \frac{\partial^2 g}{\partial y^2},
\end{eqnarray*}

\noindent with the boundary condition $g(y,1)=e^{f(y,1)}=e^y$. A family of 
solutions of this equation is given by
\begin{eqnarray*}
\sum_{n=0}^\infty g_n \frac{y^n z^{n(n-1)/2}}{n!}\,.
\end{eqnarray*}

\noindent Moreover, the boundary condition $g(y,1)=e^y$ implies $g_n=1$ for all 
$n$:
\begin{eqnarray*}
g(y,z) &=& \sum_{n=0}^\infty \frac{y^n z^{n(n-1)/2}}{n!}.
\end{eqnarray*}

\noindent Finally,
\begin{eqnarray*}
f(y,z) &=& \sum_{n=1}^\infty P_n(z) \frac{y^n}{n!} 
= \log\Big(\sum_{n=0}^\infty \frac{y^n z^{n(n-1)/2}}{n!}\Big).
\end{eqnarray*}

Note that if we set $z = (1+t)$, we obtain
\begin{eqnarray*}
f(y,t) = \log\Big(\sum_{n=0}^\infty \frac{y^n (1+t)^{n(n-1)/2}}{n!}\Big) = 
\sum_{n=0}^{\infty} C_n(t) \frac{y^n}{n!},
\end{eqnarray*}

\noindent which is the well-known exponential generating function for  $C_n(t) 
= \sum_G t^{e(G)}$, where the sum is over all connected graphs $G$ on the set 
$\{1,2,\dots,n\}$ and $e(G)$ is the number of edges of $G$.  (See \cite{Gessel} 
for this formula and related results.)  Thus, $P_n(z) = C_n(z-1)$.

Substituting $e^x$ for $z$, we obtain a generating function for $R_n$:
\begin{eqnarray*}
f(y,e^x) &=& \sum_{n=1}^\infty P_n(e^x) \frac{y^n}{n!} 
=\frac{1}{x} \sum_{n=1}^\infty R_n(x) (xye^{-x/2})^n.
\end{eqnarray*}

The generating function for $R_n(x)$ becomes
\begin{eqnarray*}
\sum_{n=1}^\infty R_n(x) s^n
&=& x f(se^{x/2}/x,e^x)
=x \log\Big(\sum_{n=0}^\infty \frac{(s/x)^n}{n!} e^{xn^2/2}\Big).
\end{eqnarray*}

However, this expression is highly singular in $x$ and does not
provide a generating function for $R_n$. The alternative expression 
\begin{eqnarray*}
x h(x,s) &=& \sum_{n=1}^\infty R_n(x) (sx)^n
=x f(se^{x/2},e^x)
=x \log\Big(\sum_{n=0}^\infty \frac{s^n}{n!} e^{xn^2/2}\Big),
\end{eqnarray*}
is not singular and generates $R_n$, in the sense
that $R_n(x)$ is $x^{-n}$ times the coefficient of $s^n$ in the
expansion of $xh(x,s)$. \end{proof}

The extraction described yields
\begin{eqnarray*}
R_n(x) &=& -x^{1-n} \sum_{m=1}^n \frac{(-1)^m}{m}
  \sum_{n_1+\dots+n_m=n} \frac{e^{x(n_1^2+\dots+n_m^2)/2}}
     {n_1!\dots n_m!},
\end{eqnarray*}
where all $n_i\ge1$.

\subsection{Explicit Forms for $J(n,k)$}

We now wish to show Theorem 3 by isolating explicit generating functions for 
$J(n,k)$ individually.
We can proceed as follows:
\begin{eqnarray*}
x h(x,s) &=& \sum_{n=1}^\infty \sum_{k=0}^\infty
  I(n,k) s^n x^{n+k}.
\end{eqnarray*}

Now replace $x \rightarrow 2x$, $s \rightarrow s/2$ to obtain

\begin{eqnarray*}
2x h(2x,s/2) &=& \sum_{n=1}^\infty \sum_{k=0}^\infty
  2^k I(n,k) s^n x^{n+k}.
\end{eqnarray*}

We define $\beta(s,t) = 2 \int_{0}^\infty d x e^{-x} h(2xt,s/2)$ to get
\begin{eqnarray*}
\beta(s,-t) &=& 2 \int_{0}^\infty d x 
  e^{-x} h(-2xt,s/2) =
\sum_{n=1}^\infty \sum_{k=0}^\infty
\int_{0}^\infty d x 
  e^{-x} 2^k I(n,k) s^n (-xt)^{n+k-1} 
\\&=&
\sum_{n=1}^\infty \sum_{k=0}^\infty
  2^k (n+k-1)! I(n,k)s^n (-t)^{n+k-1},
\end{eqnarray*}

\noindent where we used $\int_0^\infty e^{-x} x^p dx=p!$. Therefore,
\begin{eqnarray*}
\beta(s,t) &=& 
\sum_{n=1}^\infty \sum_{k=0}^\infty
  J(n,k)s^n t^{n+k-1}=
\sum_{n=1}^\infty Z_n(t) s^n t^{n-1},
\end{eqnarray*}

\noindent is a generating function for $J(n,k)$ and this equation defines 
$Z_n(t)=\sum_{k=0}^\infty J(n,k) t^k$. We calculate
\begin{eqnarray*}
\beta(s,-t) &=& 2 \int_{0}^\infty d x 
  e^{-x} \log\Big(\sum_{n=0}^\infty \frac{(s/2)^n}{n!} e^{-n^2 xt}\big)
=2 \int_0^1 d\lambda 
  \log\Big(\sum_{n=0}^\infty \frac{(s/2)^n}{n!} \lambda^{n^2t}\big),
\end{eqnarray*}

\noindent where we have put $\lambda=e^{-x}$.  This gives 
\begin{eqnarray*}
\beta(s,-t) &=& 
 -2 \sum_{m=1}^n \frac{(-1)^m}{m}
  \sum_{n_1,\dots,n_m} \frac{(s/2)^{n_1+\dots+n_m}}
     {n_1!\dots n_m!}
  \int_0^1 d\lambda
  \lambda^{t(n_1^2+\dots+n_m^2)}.
\end{eqnarray*}

Thus,
\begin{eqnarray*}
\beta(s,t) &=& 
 -2 \sum_{m=1}^n \frac{(-1)^m}{m}
  \sum_{n_1,\dots,n_m} \frac{(s/2)^{n_1+\dots+n_m}}
     {n_1!\dots n_m!}
  \frac{1}{1-(n_1^2+\dots+n_m^2)t}.
\end{eqnarray*}

The polynomials $Z_n(t)$ can be calculated by
summing over the compositions of $n$, proving Theorem 3:
\begin{eqnarray*}
Z_n(t) &=& 
 -\frac{1}{(2t)^{n-1}} \sum_{m=1}^n \frac{(-1)^m}{m}
  \sum_{n_1+\dots+n_m=n} \frac{1}{n_1!\dots n_m!}
  \frac{1}{1-(n_1^2+\dots+n_m^2)t}.
\end{eqnarray*}

\hfill $\Box$

The number of distinct sums of squares of the parts of partitions or 
compositions of $n$ is the OEIS sequence A069999; it grows like 
$n^2/2$~\cite{savit-00}.  (We remark that the first case where a sum appears 
twice is $n=6$.)  The first $Z_n(t)=\sum_{k\geq0} J(n,k)t^k$ are:
{\small \begin{eqnarray*}
Z_1(t) &=& \frac{1}{1-t},\\
Z_2(t) &=& \frac{1/2}{(1-2t)(1-4t)},\\
Z_3(t) &=& \frac{1}{(1-3t)(1-5t)(1-9t)},\\
Z_4(t) &=& \frac{4-34t}{(1-4t)(1-6t)(1-8t)(1-10t)(1-16t)},\\
Z_5(t) &=& \frac{25-606t+3557t^2}{(1-5t)(1-7t)(1-9t)(1-11t)(1-13t)
  (1-17t)(1-25t)},\\
Z_6(t) &=& \frac{24(9-451t+7292t^2-37860t^3)}
  {(1-6t)(1-8t)(1-10t)(1-12t)(1-14t)(1-18t)(1-20t)(1-26t)(1-36t)}.
\end{eqnarray*}}

\section{Formulas in Constant $n$: The Hands-on Approach}\label{sec:hands}

In this section we are interested in re-deriving the $Z_n(t)$ more directly, 
and finding the closed formulas to prove Theorem 4.  Using strictly discrete 
mathematics instead of the generating function, we seek additional insight into 
the combinatorial structure of the sequences $J(n,k)$.

The case $J(1,k)=1$ for $k>0$ is trivial from the recursion.

For $n=2$, the recursion (\ref{eq:Jzero}) becomes
\begin{eqnarray*}
J(2,k) & = & 4J(2,k-1) + \frac{1}{2}\sum_{j=0}^k \binom{k}{j} J(1,j) J(1,k-j) \\
 & = & 4J(2,k-1) + \frac{1}{2} \sum_{j=0}^{k} \binom{k}{j} = 4J(2,k-1) + 
2^{k-1}.
\end{eqnarray*}

We now note that $4(4^{k-1}-2^{k-2}) + 2^{k-1} = 4^k - 2^k + 2^{k-1} = 4^k - 
2^{k-1}$ and since the statement holds for $k=0$, the claim on the values 
follows inductively.

That the value $4^k - 2^{k-1}$ is the coefficient of $x^k$ in 
$\frac{1}{2(1-2x)(1-4x)}$ can be seen from expansion: the coefficient is
\begin{eqnarray*}
\frac{1}{2} \left(4^k + 4^{k-1}2 + 4^{k-2}2^2 + \dots + 2^k \right) & = & 
\frac{1}{2}4^k\left(1+\frac{1}{2}+\frac{1}{4} + \dots + \frac{1}{2^k}\right) \\
 & = & \frac{1}{2}4^k \left( 2 - \frac{1}{2^k} \right) = 4^k - 2^{k-1}.
\end{eqnarray*}

For $n=3$, the recursion (\ref{eq:Jzero}) gives
\begin{multline}\label{JThreeK}
J(3,k)  =  9J(3,k-1) + \\
   \frac{1}{2}\cdot 2 \sum_{j=0}^k \left[ \binom{k+1}{j} J(1,j)J(2,k-j) + 
\binom{k+1}{j+1} J(2,j) J(1,k-j)  \right] \\
  =  9J(3,k-1) + \sum_{j=0}^k \left[ \binom{k+1}{j} \left(4^{k-j}-2^{k-j-1} 
\right) + \binom{k+1}{j+1} \left( 4^j - 2^{j-1}\right) \right] \\
   =  9J(3,k-1) + 2 \sum_{j=0}^k \binom{k+1}{j+1} \left( 4^j - 2^{j-1} \right) 
\\
    =  9J(3,k-1) + \frac{1}{2} \sum_{i=0}^{k+1} \binom{k+1}{i} 4^i - 
\frac{1}{2} \sum_{i=0}^{k+1} \binom{k+1}{i} 2^i \\
    =  9J(3,k-1) + \frac{1}{2} (1+4)^{k+1} - \frac{1}{2} (1+2)^{k+1} \\
    =  9J(3,k-1) + \frac{1}{2} 5^{k+1} - \frac{1}{2} 3^{k+1}.
\end{multline}

Let us pause to note the process here.  We added a term to the sum, namely the 
$i=0$ term, which is the $j=-1$ case of $4^j - 2^{j-1}$.  However, this term is 
0.  We then complete the binomial sum, changing the 4 to a 5, and the 2 to a 3.

Iterating the recursion with the form thus obtained, we obtain the closed 
formula for $J(3,k)$:
\begin{eqnarray*}
J(3,k) & = & 9J(3,k-1) + \frac{1}{2}5^{k+1} - \frac{1}{2} 3^{k+1} \\
 &=& 9^2 J(3,k-2) + 9 \cdot \frac{1}{2} \left(5^k - 3^k\right) + \frac{1}{2} 
\left(5^{k+1} - 3^{k+1}\right) \\
 &=& \dots = 9^k J(3,0) + \frac{1}{2}\left( 9^{k-1} \left( 5^2 - 3^2 \right) + 
\dots + \left( 5^{k+1} - 3^{k+1} \right) \right) \\
 &=& \frac{1}{2} 9^{k+1} \cdot \left( \frac{1}{9} \left(5^1 - 3^1 \right) + 
\dots + 9^{-(k+1)} \left( 5^{k+1} - 3^{k+1} \right) \right) \\
  &=& \frac{1}{2} 9^{k+1} \left[ \sum_{i=1}^{k+1} \left( \frac{5}{9} \right)^i 
- \left( \frac{3}{9} \right)^i\right] \\
  &=& \left( \frac{81}{24} \right) 9^k + \left( \frac{-75}{24} \right) 5^k + 
\left( \frac{18}{24} \right) 3^k .
\end{eqnarray*}

To show that the generating function claim is true, we factor the last term of 
equation (\ref{JThreeK}):
\begin{eqnarray*}
J(3,k) &=& 9J(3,k-1) + \frac{1}{2}5^{k+1} - \frac{1}{2}3^{k+1} \\
&=& 9 J(3,k-1) + \frac{1}{2}(5-3) \sum_{i=0}^k 5^{k-i}3^i \\
&=& 9J(3,k-1) + \sum_{i=0}^k 5^{k-i}3^i .
\end{eqnarray*}

The same recurrence and initial condition is satisfied by the coefficient of 
$x^k$ in $\frac{1}{(1-3x)(1-5x)(1-9x)}$, as we can see by noting that this 
coefficient is the complete homogeneous symmetric polynomial in 3, 5, and 9: 
\begin{multline*}
\left[ x^k \right] \frac{1}{(1-3x)(1-5x)(1-9x)} = \sum_{{a+b+c=k} \atop {a,b,c 
\in \mathbb{N}}} 9^a5^b3^c \\ = 9 \sum_{{a+b+c=k-1} \atop {a,b,c \in 
\mathbb{N}}} 9^a 5^b 3^c + \sum_{i=0}^k 5^{k-i}3^i \, \text{.}
\end{multline*}

\noindent where $\left[ x^k \right] f(x)$ denotes the coefficient of $x^k$ in 
$f(x)$.

The $n=4$ case is proved similarly.  Begin by expanding the defining recursion 
for $J(n,k)$ using the formulas for $J(1,k)$ through $J(3,k)$.  Gather 
symmetric expansions with the same binomial coefficients.
\begin{multline*}
J(4,k)= 16 J(4,k-1)+ \\ 
\sum_{j=2}^{k+2} \binom{k+2}{j} \left( \left(\frac{81}{8} \right) 9^{j-2} + 
\left(\frac{-75}{8} \right) 5^{j-2} + \left( \frac{18}{8} \right) 3^{j-2} 
\right) + \\
2 \sum_{j=1}^{k+1} \binom{k+2}{j} \left( 4^{j-1} - 2^{j-2} \right) \left( 
4^{k-j+1} -2^{k-j} \right) .
\end{multline*}

Now complete the sums so that the bounds are $\sum_{j=0}^{k+2}$.  The crucial 
observations at this point are again that $4^i-2^{i-1}=0$ for $i=-1$, and 
likewise $\left(\frac{81}{8} \right) 9^i + \left(\frac{-75}{8} \right) 5^i + 
\left( \frac{18}{8} \right) 3^i=0$ for $i\in\{-1,-2\}$, and so the terms to be 
added are all 0.

Using the binomial theorem to sum, we find 
\begin{multline*}
J(4,k) = 16J(4,k-1) + \left( \frac{25}{2} \right) 10^k + \left( \frac{-45}{2} 
\right) 6^k + \left( 6 \right) 4^k + \left( 8 \right) 8^k .
\end{multline*}

\noindent At this point the desired generating function can be shown to have 
coefficients given by the same recursion.  By iterating the recursion and 
summing the truncated geometric series, we obtain the formula given in Theorem 
4. 

\hfill $\Box$

The values for the formula for $J(4,k)$ are indeed zero at $k\in\{-1,-2,-3\}$, 
and so a similar neat expansion holds for $J(5,k)$.  This property holds 
forever (to argue this, observe the factor $1/(2t)^{n-1}$ in the expansion of 
$Z_n(t)$, and note that no automorphism has negative order, so that the 
$Z_n(t)$ have no negative order terms; this implies that the sums vanish for 
$-n<k<0$).  The process of completing the binomial, summing with the binomial 
formula, and then iterating the recursion and summing the geometric series will 
thus provide explicit formulas easily.  

From the expansion of $Z_n(t)$ at the end of the previous section it is clear 
that the sequences $J(n,\cdot)$ will all be tidy exponential sums with number 
of terms given by A069999, and exponential bases themselves given by 
the sums 
of squares being counted by that sequence.  The coefficients, then, constitute 
the hard part of the formulae to know in advance.

\section{Formulas in Constant $k$}\label{sec:sloane}

In this section we prove Theorem 4, showing that the sequences that solve our 
recurrence for $k=0,1$ are respectively A007830 and A001865
in \cite{OEIS}.

\begin{theorem4*} For all $n\ge 1$:
\begin{enumerate}[(a)]
\item
$J(n,0) = n^{n-3}\,,\quad\mbox{or equivalently,}\quad  I(n,0) = 
\frac{n^{n-2}}{n!}\,.$
\item 
$J(n,1)=n!\sum_{\mu=1}^n \frac{n^{n-\mu-1}}{(n-\mu)!}\,,\quad\mbox{or 
equivalently,}\quad  I(n,1)=\frac{1}{2} \sum_{\mu=1}^n 
\frac{n^{\mu-2}}{(\mu-1)!}\,.$
\end{enumerate}
\end{theorem4*}

\begin{proof} To prove (a), note that the recurrence for $J(n,0)$ reads as 
follows:
\begin{eqnarray}\nonumber
J(1,0)&=&1\,,\\\label{eq:Dzione}
J(n,0) & =& \frac{1}{2n(n-1)} \sum_{i=1}^{n-1} \binom{n}{i} 
i^2(n-i)^2J(i,0)J(n-i,0)\,.
\end{eqnarray}

For $T(n)\defeq nJ(n,0)$ we obtain Dziobek's recurrence \cite{dziobek} for 
Cayley's formula:
\begin{eqnarray}\nonumber
T(1)&=&1\,,\\\label{eq:DziobekT}
T(n) & =& \frac{1}{2(n-1)} \sum_{i=1}^{n-1} \binom{n}{i} i(n-i)T(i)T(n-i)\,.
\end{eqnarray}

This is thus solved by $T(n)=n^{n-2}$ which is the sequence A000272 in 
\cite{OEIS}. Therefore, we have $J(n,0)=n^{n-3}$. 

\bigskip

To prove (b)  
we first recall the following Abel-type identity \cite[p. 93]{Riordan}:

$$n(n+y)^{n-1} = \sum_{i=1}^n \binom{n}{i} i(-x+1)(-x+i)^{i-2}(x+y+n-i)^{n-i} 
.$$

For $x=0$ and $y=j$ the above formula specializes to 
\begin{equation}\label{AbelFormula} n(n+j)^{n-1} = \sum_{i=1}^n \binom{n}{i} 
i^{i-1} (n+j-i)^{n-i} .\end{equation}

We now proceed by induction on $n$.  The result clearly holds for $n=1$. We 
assume the result to hold for all $t$ in $0 < t < n$.  Then
\begin{eqnarray*} J(n,1) & = & n^{n-1} + \frac{1}{2} \sum_{i=1}^{n-1} \left( 
\binom{n-1}{i-1} i^{i-2} (n-i)! \sum_{\mu=1}^{n-i} 
\frac{(n-i)^{n-i-\mu}}{(n-i-\mu)!} \right. \\
&&+ \left. \binom{n-1}{i} (n-i)^{n-i-2} i! \sum_{\mu=1}^i 
\frac{i^{i-\mu}}{(i-\mu)!} \right) \\
&= &n^{n-1} + \frac{1}{2n} \sum_{i=1}^{n-1} \binom{n}{i} \left( i^{i-1}(n-i)! 
\sum_{\mu=1}^{n-i} \frac{(n-i)^{n-i-\mu}}{(n-i-\mu)!} \right. \\
& & + \left. (n-i)^{n-i-1}i!\sum_{\mu=1}^i \frac{i^{i-\mu}}{(i-\mu)!} \right) \\
&=& n^{n-1} + \frac{1}{n} \sum_{\mu=1}^{n-1} \sum_{i=1}^{n-\mu} \binom{n}{i} 
i^{i-1} (n-i)! \frac{(n-i)^{n-i-\mu}}{(n-i-\mu)!} \\
&=& n^{n-1} + \frac{1}{n}n! \sum_{\mu=1}^{n-1} \sum_{i=1}^{n-\mu} 
\frac{i^{i-1}}{i!} \frac{(n-i)^{n-i-\mu}}{(n-i-\mu)!}\,.
\end{eqnarray*}

Now, formula (\ref{AbelFormula}) yields for $n=m+\mu$:
$$\sum_{i=1}^m \frac{i^{i-1}}{i!} \frac{(m+\mu-i)^{m-i}}{(m-i)!} = \frac{1}{m!} 
\sum_{i=1}^m \binom{m}{i} i^{i-1} (m+\mu-i)^{m-i} = \frac{m(m+\mu)^{m-1}}{m!} 
.$$

Substituting this into the previous line we obtain
\begin{eqnarray*} J(n,1) & = & n^{n-1} + \frac{1}{n} n! \sum_{\mu=1}^{n-1} 
\frac{(n-\mu) n^{n-\mu-1}}{(n-\mu)!} = n^{n-1} + n! \sum_{\mu=1}^{n-1} 
\frac{n^{n-\mu-2}}{(n-\mu-1)!} \\
 & =&  n! \sum_{\mu=0}^{n-1} \frac{n^{n-\mu-2}}{(n-\mu-1)!} = n! \sum_{\mu=1}^n 
\frac{n^{n-\mu-1}}{(n-\mu)!}
\end{eqnarray*}

\noindent as desired. \end{proof}

A natural question is whether the recurrence can be similarly simplified for 
$k>1$.  The related sequences were not in the OEIS prior to this investigation, 
and examination of the graph enumeration literature, both fundamental 
(\cite{HaPa73,bergeron,comtet}) and more recent, did not uncover likely 
candidates for related generating functions.  Attempts with Abel-type 
identities of greater generality did not yield results that we considered 
useful, but the quest does not seem theoretically implausible; the authors 
would be interested in observations that readers might be able to provide.

\appendix 

\section{Appendix: Graph Theoretic Basics}\label{sec:basics}

We give the mathematical definitions related to Feynman graphs 
(without external edges) which are
relevant for understanding the definition of $I(n,k)$. Note that these concepts 
are only included here to make the paper 
self-contained. For simplicity in the following we shall refer to Feynman 
graphs simply as {\it{graphs}}. 

\bigskip

Let  $A$ and $B$ denote   sets.  By  
$[A]^2$ we denote the set of all $\two$-element subsets of $A$.  
Also, by $\two^A$, we denote the power set of $A$, i.e., the set of all subsets
of $A$.  
Finally, we
recall  that  the symmetric difference of the sets $A$ and $B$ is given by
$A\triangle B\defeq(A\cup B)\backslash (A\cap B)$.

\medskip

Let  $V=\{v_i\}_{i\in\mathbb{N}}$ and
$K=\{\legs_a\}_{a\in\mathbb{N}}$ be finite sets with $V\neq\emptyset$.
Let  $E\subseteq[K]^\two$ with $\cup_{a\in E}a=K$. Also, let the elements of $E$
satisfy
$\{\legs_a,\legs_{a'}\}\cap\{\legs_b,\legs_{b'}\}=\emptyset$. In this context, a {\it{graph}} is a pair $\graph=(V\times K,E)$ 
together
with  the following  mappings: 
\begin{enumerate}[(a)]
\item $\psi:K\to V$;
\item$\map:E\rightarrow [V]^\two\cup V; \{\legs_a,\legs_{a'}\}\mapsto
\{\psi(e_a),\psi(e_{a'})\}.$
\end{enumerate}
The elements of $V$ and $E$ are called {\it{vertices}} and {\it{edges}},
respectively. The edges are unordered pairs of elements of $K$. The elements of 
these pairs are called {\it{ends}} of edges.
The edges with both ends assigned to the
same vertex are also called {\it{loops}}.
Two or more edges joining the same pair of distinct vertices,
are called {\it{multiple edges}}.  The {\it{degree}} of a vertex is the number
of ends of edges
assigned to the vertex. Clearly, a loop adds $\two$ to the degree of a vertex.
For instance, Figure  \ref{fig:figure} (a)  shows a loop, while Figure
\ref{fig:figure} (b) shows  a graph with multiple edges. 

\begin{figure}
\begin{center}
\includegraphics[width=7cm]{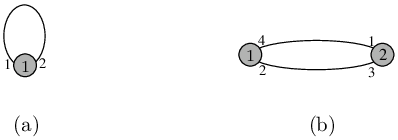}
\end{center}
\caption{(a) A loop; (b) A graph with multiple edges.}
\label{fig:figure}
\end{figure}

Here, a {\it{path}}  is a graph $P=(V\times K,E)$ together with the
mappings $\psi$ and $\map$, where $V=\{v_1,\ldots,v_{\vertex}\}$,
and
$\map(E)=\{\{v_1,v_\two\},\{v_\two, v_3\},\ldots,\{v_{n-1},v_\vertex\}\}$. 
Moreover, a {\it{cycle}}   is a  graph $C=(V'\times K',E')$ together 
with
the mappings $\psi'$ and $\map'$, where $V'=\{v_1,\ldots,v_{\vertex}\}$,
 and 
  $\map'(E')=\{\{v_1,v_\two\},\{v_\two,
v_3\},\ldots,\{v_{\vertex-1},v_\vertex\}, \{v_\vertex, v_1\}\}$. 
A  graph is said to be {\it{connected}} if every pair of vertices is 
joined
by a path. Otherwise, it is {\it{disconnected}}.
Moreover, a {\it{tree}}  is a connected graph with no cycles.

Furthermore, let  $\graph=(V\times K,E)$ together with the mappings $\psi$ and
$\map$ denote a graph. The set $\two^{E}$ is a vector space over the field
$\mathbb{Z}_\two$ so that vector addition is given by the  symmetric difference.
The {\it{cycle space}} $\CC$ of the graph $\graph$ is defined as the
subspace of $\two^{E}$ generated by all the cycles in $\graph$. The dimension of
$\CC$ is  called the {\it{cyclomatic number}} of the graph $\graph$. 

\pagebreak

Now, let $\graph=(V\times K,E)$ together with the mappings $\psi$ and $\map$,
and $\graph^*=(V^*\times K^*,E^*)$ together with the maps $\psi^*$ and $\map^*$
denote two graphs.  An {\it{isomorphism}} between the graphs $\graph$
and $\graph^*$ is a bijection  $f_{V\times K}\defeq f_V\times f_K:V\times K\to
V^*\times K^*$, where $f_V:V\to V^*$ and $f_K:K\to K^*$,  
which satisfies the following conditions:
\begin{enumerate}[(a)]
\item $\psi(e_a)=v_i\quad\mbox{iff}\quad\psi^*(\iso_K(e_a))=\iso_V(v_i),$
\item$\map(\{\legs_{a},\legs_{a'}\})=\{v_i,v_{i'}\}\quad \mbox{iff}\quad
\map^*(\{f_K(\legs_{a}),f_K(\legs_{a'})\})=\{f_V(v_i),f_V(v_{i'})\}.$
\end{enumerate}
Clearly, an isomorphism defines an equivalence relation on graphs. In this 
context, an automorphism of a graph $\graph$ is an isomorphism of the 
graph  onto itself. It is easy to verify that the order of the groups of 
automorphisms of the graph of Figure \ref{fig:figure} (a) is 2, while that 
of Figure \ref{fig:figure} (b) is 4. 

Finally, note that  in the context of Section 
\ref{sec:enumerator}, the number of connected graphs on the set 
$\{1,...,n\} \times 
\{1,...,2(n+k-1)\} $ is $(2(n+k-1))!n!I(n,k)$.

\bigskip

\label{lastpage-01}


\begin{thebibliography}{99}
\bibitem{bergeron} F. Bergeron, G. Labelle, and P. Leroux, {\it{Combinatorial Species and Tree-like Structures}}, Cambridge University Press, 1998.
\bibitem{comtet}
L. Comtet, {\it{Advanced Combinatorics: The Art of Finite and Infinite Expansions}}, Riedel, Dordrecht, 1974.
\bibitem{dziobek} J. W. Moon, {{Counting Labeled Trees}}, \textit{Canadian Mathematical Monographs}, no. 1, (1970)
\bibitem{Gessel} I. Gessel and D.-L. Wang, {{Depth-first seach as a combinatorial correspondence}}, \textit{J. Combin. Theory Ser. A}, 26 (1979), 308-313.
\bibitem{FKB}Ph. Flajolet, D. E. Knuth, and B. Pittel, {{The first cycles in an evolving graph}}, \textit{Discrete Math.}, 75(1-3) (1989), 167-215.
\bibitem{HaPa73} F. Harary and E. Palmer, {\it{Graphical Enumeration}}, Academic Press, New York-London, 1973, xiv+271 pp.
\bibitem{Kirch} G. Kirchhoff, {\it{{\"U}ber die Aufl{\"o}sung der Gleichungen, auf welche man bei der Untersuchung der linearen Vertheilung galvanischer Str{\"o}me gef{\"u}hrt wird}}, \textit{Ann. Phys. Chem.}, 72 (1847), 497--508.
\bibitem{MeOe:loop} {\^A}. Mestre and R. Oeckl, {{Generating loop graphs via Hopf algebra in quantum field theory}}, \textit{J. Math. Phys.}, 47(12):122302, 2006.
\bibitem{Riordan} J. Riordan, {\it{Combinatorial Identities}}, Robert E. Krieger Publishing Company Huntington, New York, 1979.
\bibitem{OEIS} N. J. A. Sloane (Ed.), \textit{The On-Line Encyclopedia of
Integer Sequences},  URL: http://oeis.org/, 2011.

\pagebreak

\bibitem{Polya} G. P{\'o}lya, {{Kombinatorische Anzahlbestimmungen f$\ddot{\mbox{u}}$r Gruppen, Graphen und chemische Verbindungen}}, \textit{Acta Math.}, 68 (1937), 145--254.
\bibitem{savit-00} D. Savitt and R. Stanley, {{A note on the symmetric powers of the standard representation of $S_n$}}, \textit{Electron. J. Comb.} 7 (2000), \#R6.
\end{thebibliography}
\end{document}